\newtheorem{prop}{Proposition}[section]
\newtheorem*{defi*}{Definition}
\newtheorem{exam}[prop]{Example}
\newtheorem{rem}[prop]{Remark}
\newtheorem{thm}[prop]{Theorem}
\newtheorem{coro}[prop]{Corollary}
\newcommand{\Pp}{\mathbb{P}} 
\newcommand{\cb}{{\mathcal B}} 
\newcommand{\cd}{{\mathcal D}} 
\newcommand{\ce}{{\mathcal E}} 
\newcommand{\cu}{{\mathcal U}} 
\newcommand{\R}{{\mathbb{R}}}
\numberwithin{equation}{section}
\title{\large Strong Feller semigroups and Markov processes: A counter example}
\author{
Lucian Beznea
\footnote{Simion Stoilow Institute of Mathematics  of the Romanian Academy,
 Research unit No. 2, 
P.O. Box \mbox{1-764,} RO-014700 Bucharest, Romania
%, and  University of Bucharest, Faculty of Mathematics and Computer Science 
(e-mail: lucian.beznea@imar.ro)}
\and
Iulian C\^{i}mpean
\footnote{Faculty of Mathematics and Computer Science, University of Bucharest, 14~Academiei, 010014~Bucharest,
and ``Simion Stoilow" Institute of Mathematics of the Romanian Academy, 21~Calea Grivi\c{t}ei, 010702~Bucharest, Romania
(e-mail: iulian.cimpean@unibuc.ro)}
\and
Michael R\"ockner
\footnote{Fakult\"at f\"ur Mathematik, Universit\"at Bielefeld,
Postfach 100 131, D-33501 Bielefeld, Germany, and Academy for Mathematics and Systems Science, CAS, Beijing 
(e-mail: roeckner@mathematik.uni-bielefeld.de)}
}
\date{}
\begin{document}

\maketitle
\begin{abstract}
The aim of this note is to show, by providing an elementary way to construct counter-examples, that the strong Feller and the joint (space-time) continuity  for a semigroup of Markov kernels on a Polish space are not enough to ensure the existence of an associated c\`adl\`ag Markov process on the same space. 
One such simple counter-example is the Brownian semigroup on $\mathbb{R}$ restricted to $\mathbb{R}\setminus \{0\}$, for which it is shown that there is no associated c\`adl\`ag Markov process.
Using the same idea and results from potential theory we then prove that the analogous result with c\`adl\`ag Markov process replaced by right Markov process also holds, even if one allows to change the Polish topology to another Polish topology with the same Borel $\sigma$-algebra.
\end{abstract}
\noindent
{\bf Keywords:}  
Feller semigroup;
right Markov process; Brownian motion; 
excessive functions and measures; c\`adl\`ag Markov process.
%fine topology.

%\vspace{2mm}

\noindent
{\bf Mathematics Subject Classification (2010):} 
%60H15,     % Stochastic partial differential equations [See also 35R60]
60J40,  	% Right processes
%60H10,     % Stochastic ordinary differential equations [See also 34F05]
60J45,  	% Probabilistic potential theory
60J35, 	% Transition functions, generators and resolvents
%60J57,  	% Multiplicative functionals
%31C25,      % Dirichlet spaces
47D07,  	% Markov semigroups and applications to diffusion processes 
%35R60,       % Partial differential equations with randomness, stochastic partial differential equations
60J25.        % Continuous-time Markov processes on general state spaces
%37C40 (primary), 37A30, 37L40, 60J35, 60J25, 31C25, 37C40,  82B10 (secondary)
%47D03  	%Groups and semigroups of linear operators 
%47A35  	%Ergodic theory
%37A30   %Ergodic theorems, spectral theory, Markov operators
%37C40   %Smooth ergodic theory, invariant measures
%37L40   %Invariant measures (Infinite-dimensional dissipative dynamical systems)
%60J55  	Local time and additive functionals
%60J25  	Continuous-time Markov processes on general state spaces
%82B10  	Quantum equilibrium statistical mechanics (general)
%31C05  	Harmonic, subharmonic, superharmonic functions

\section{Introduction}
It seems that there is an often asked  question
%there was a kind of  ''folklore belief'' 
in the field whether for a semigroup of Markov kernels $(P_t)_{t \geq 0}$ on a Polish space $E$, which has the strong Feller property, i.e.
\begin{align}\tag{1.1}\label{eq:1.1}
f \in \mathcal{B}_b(E) \Longrightarrow P_t f:= \int_E f(y) P_t(\cdot , dy) \in C_b(E)\ \mbox{ for all } t \geq 0,
\end{align}
there exists a Markov process with state space $E$,  with c\`adl\`ag  sample paths such that $(P_t)_{t \geq 0}$ is its transition function, at least if, additionally,
\begin{align}\label{eq:1.2}\tag{1.2}
[0, \infty) \times E \ni (t, x) \longmapsto P_t f(x) \in \mathbb{R} \text{ is continuous for all } f \in C_b(E).
\end{align}
Here $C_b(E),\ \mathcal{B}_b(E)$ denote the set of all real-valued bounded continuous and Borel measurable functions on $E$, respectively.
In this paper we prove that this is not true. More precisely, we present a generic method how to get counter examples easily,  by restricting a c\`adl\`ag Markov process with transition semigroup $(P_t)_{t \geq 0}$, satisfying \eqref{eq:1.1} and \eqref{eq:1.2} to a subset of its state space which is given as the complement of a closed set $N$, which is of measure zero
with respect to its initial distribution and to every kernel $P_t(x, dy),\ x \in E,\ t \geq 0$, but which is hit by the process in finite time with strictly positive probability. In particular, we can take this c\`adl\`ag Markov process to start with as the Brownian motion on $\mathbb{R}^1$ (see Example 2.2 below). 
We present an elementary proof for this in Section 2.1 of this paper (see Theorem 2.1 below).
In Section 2.2,  based on advanced results from potential theory,
we prove something deeper (see \Cref{thm1}),
namely that there exists no Polish topology on the state space $E$ with the same Borel $\sigma$-algebra as the initial one, with respect to which for the semigroup, constructed there, 
there exists a right (not necessarily c\`adl\`ag) Markov process having it as its transition semigroup.

We would like to mention that the above question, without assuming \eqref{eq:1.2},
was originally posed by Y. Le Jan which was answered negatively already in \cite{BeRo11} (see the Example at p. 849) and Brownian motion was identified as a counter example already in \cite[Corollary A.16]{BeCiRo20}. 
One novelty of this paper is to include \eqref{eq:1.2} and give a simpler (and more pedagogical) proof.
The second is to allow (in \Cref{thm1}) a change of the Polish topology.

In Section 2.3 we present an application to quasi-regular Dirichlet forms, giving examples of Dirichlet forms which are not quasi-regular,
by eliminating from the space a set which is not polar.

Finally, in Section 2.4 we give an example of a Hunt process on a locally compact space, for which its transition function is not a Feller semigroup on  the continuous functions vanishing at infinity. 
This completes the well-known  result  on  the construction of a Hunt  process having a given Feller semigroup as transition function, see e.g. Theorem 9.4 from \cite{BlGe68}, showing that its converse does not hold.

\section{Main results}

\subsection{An elementary approach}
Let us introduce the settings and notations through the following three assumptions:
\begin{enumerate}
    \item[$\rm H_1.$] $E$ is a Polish space (i.e. a topological space which is homeomorphic to a complete and separable metric space) endowed with its Borel $\sigma$-algebra $\mathcal{B}(E)$. 
    \item[$\rm H_2.$] $(P_t)_{t\geq 0}$ is a semigroup of Markov kernels on $E$ such that $P_t$ is strong Feller for each $t>0$ and the mapping $[0,\infty)\times E\ni(t,x)\longmapsto P_tf(x)\in \mathbb{R}$ is jointly continuous for each $f\in C_b(E)$.
    \item[$\rm H_\mu.$] There exists a (temporally homogeneous) Markov process $X^\mu:=(X^{\mu}_t)_{t\geq 0}$ on $E$ 
    with a.s. c\`adl\`ag paths, transition function $(P_t)_{t\geq 0}$ and initial distribution $\mu$, that is $X^\mu$ is an $E$-valued (adapted) stochastic process on a filtered probability space $(\Omega, \mathcal{F},\mathcal{F}_t,\mathbb{P}^\mu)$ such that $\mathbb{P}^\mu\circ X_0^{-1}=\mu$ and
    \begin{equation*}
        \mathbb{E}\left[f(X_{t+s}) | \mathcal{F}_t\right]=P_sf(X_t) \quad \mbox{ for all } s,t,\geq 0 \mbox{ and } f\in b\mathcal{B}(E).
    \end{equation*}
    \end{enumerate}
    
    Notice that due to the right continuity 
    %c\'adl\'ag  property 
    of $X^\mu$ and 
    % due 
    to the (strong) Feller property of $(P_t)_{t\geq 0}$, we may (and will) assume without loss that the filtration $(\mathcal{F}_t)_{t\geq 0}$ is right-continuous and complete in $\mathcal{F}$ with respect to $\mathbb{P}^\mu$. 
    In particular, the hitting time of a Borel subset $A\in \mathcal{B}(E)$ given by
    \begin{equation*}
        T_A:=\inf\{t>0 : X_t\in A\}
    \end{equation*}
    is an $\mathcal{F}_t$-stopping time; see e.g. \cite{Ba10} for a short proof that avoids capacity theory.

The main result is the following.
Roughly speaking, it says that if $(P_t)_{t\geq 0}$ admits a c\`adl\`ag Markov process, and if we remove from the state space a set $N$ which is hit by the process, 
but not charged by $P_t,t>0$, then the semigroup restricted to the remaining state space renders a new semigroup which may inherit much of the topological regularity of the unrestricted semigroup, but there is no corresponding c\`adl\`ag Markov process. \

\begin{thm} \label{thm:pathspace}
Let the triplet $(E,(P_t)_{t\geq 0},X^\mu)$ be such that $\rm H_1, \rm H_2, \rm H_\mu$ are  fulfilled, and assume that $N\subset E$ is a closed subset such that 
\begin{equation}\label{eq:N}
    \mu(N)=0,\quad P_t(N,x)=0 \mbox{ for all } t>0, x\in E, \quad \mbox{ and } \quad  
    \mathbb{P}^\mu(T_N<\infty)>0.
\end{equation}
Further, set
\begin{equation}\label{eq:F}
    F:=E\setminus N \quad \mbox{ and } \quad
    Q_t(A,x):=P_t(A,x) \quad \mbox{for all } t\geq 0, x\in F, A\in \mathcal{B}(F),
\end{equation}
where $\mathcal{B}(F)$ is the Borel $\sigma$-algebra on $F$.

Then the pair $(F,(Q_t)_{t\geq 0})$ satisfies $\rm H_1 \mbox{ and } H_2$,  but fails to satisfy $\rm H_{\mu|_{F}}$.
\end{thm}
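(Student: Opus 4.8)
The plan is to verify the three assertions in turn, the first two being routine and the third being the crux. For $\mathrm{H}_1$: $F = E \setminus N$ is an open subset of a Polish space, hence is itself Polish (a standard fact — an open subset of a Polish space is $G_\delta$, and $G_\delta$ subsets of Polish spaces are Polishable). Its Borel $\sigma$-algebra is $\mathcal{B}(F) = \{A \cap F : A \in \mathcal{B}(E)\}$. For $\mathrm{H}_2$: Since $P_t(N,x) = 0$ for all $t > 0$ and $x \in F$, the kernel $Q_t(\cdot,x)$ is genuinely a probability measure on $F$, and the semigroup property $Q_{t+s} = Q_t Q_s$ on $F$ follows from that of $(P_t)$ together with $P_t(N,\cdot) = 0$ — no mass escapes through $N$. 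For the strong Feller property and joint continuity: given $g \in \mathcal{B}_b(F)$ (resp. $g \in C_b(F)$), one would like to extend $g$ to $E$ and apply $\mathrm{H}_2$. For a bounded Borel $g$ on $F$, extend it by $0$ on $N$ to get $\tilde g \in \mathcal{B}_b(E)$; then $Q_t g(x) = P_t \tilde g(x)$ for $x \in F$, which is in $C_b(E)$ restricted to $F$, hence in $C_b(F)$. Joint continuity for $g \in C_b(F)$: here one must be slightly careful since a continuous bounded function on the open set $F$ need not extend continuously to $E$; but the map $(t,x) \mapsto Q_t g(x)$ on $[0,\infty) \times F$ can still be handled, e.g. by noting that for $t > 0$ the strong Feller property already gives continuity in $x$ for merely Borel $g$, and continuity in $t$ at $t=0$ for $g \in C_b(F)$ can be obtained by a localization/truncation argument approximating $g$ on $F$ by functions that do extend continuously, or — more cleanly — by observing that joint continuity on $[0,\infty)\times E$ for $\tilde g$-type extensions suffices since $N$ is not charged. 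I would state this carefully but not belabor it.

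The heart of the matter is showing that $\mathrm{H}_{\mu|_F}$ fails, i.e. that there is no càdlàg Markov process on $F$ with transition function $(Q_t)_{t\geq 0}$ and initial law $\mu|_F$. Suppose for contradiction such a process $Y = (Y_t)_{t\geq 0}$ exists on some $(\Omega, \mathcal{F}, \mathcal{F}_t, \mathbb{P})$ with $\mathbb{P} \circ Y_0^{-1} = \mu|_F = \mu$ (recall $\mu(N) = 0$, so $\mu|_F$ and $\mu$ are the same measure, now viewed on $F$). The key observation is that $Y$, being $F$-valued and càdlàg with transition kernels $Q_t(\cdot,x) = P_t(\cdot,x)|_F$, is also — when viewed as an $E$-valued process — a Markov process with transition function $(P_t)_{t\geq 0}$ and initial law $\mu$, with càdlàg paths in $E$ (the inclusion $F \hookrightarrow E$ is continuous, so càdlàg in $F$ implies càdlàg in $E$; and the kernels agree because $N$ is $P_t$-null). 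But then $Y$ and $X^\mu$ are two càdlàg Markov processes on $E$ with the same transition function and the same initial distribution $\mu$, hence they have the same law on the Skorokhod path space $D([0,\infty), E)$ — finite-dimensional distributions of a Markov process are determined by $(P_t)$ and $\mu$, and for càdlàg processes these determine the law on path space. In particular, $\mathbb{P}(T_N^Y < \infty) = \mathbb{P}^\mu(T_N < \infty) > 0$, where $T_N^Y = \inf\{t > 0 : Y_t \in N\}$. This is the contradiction: $Y$ takes values in $F = E \setminus N$, so $Y_t \notin N$ for all $t$, whence $T_N^Y = \infty$ almost surely.

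The main obstacle — the one step requiring genuine care rather than bookkeeping — is the assertion that the law on path space of a càdlàg Markov process is determined by its transition function and initial distribution. This is standard but does use càdlàg regularity: finite-dimensional distributions are immediate from the Markov property, and two càdlàg processes with the same finite-dimensional distributions induce the same law on $D([0,\infty),E)$ because the Borel $\sigma$-algebra of the Skorokhod space is generated by the coordinate (evaluation) maps. One subtlety worth flagging: to make sense of $T_N$ as a random variable and of the equality $\mathbb{P}(T_N^Y < \infty) = \mathbb{P}^\mu(T_N < \infty)$, it is cleanest to work on the canonical path space $D([0,\infty),E)$ where $T_N$ is a measurable functional, and to note $\{T_N < \infty\}$ is a measurable subset of path space (it equals $\bigcup_{n}\{\exists\, t \le n : \omega(t) \in N\}$, and for càdlàg paths the inner event is measurable since $N$ is closed — one can use that $\omega(t) \in N$ for some $t \le n$ iff $\inf_{t \le n} d(\omega(t), N) = 0$, using right-continuity and the closedness of $N$, or simply invoke that hitting times of closed sets by càdlàg processes are stopping times as already recalled after $\mathrm{H}_\mu$). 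Modulo this, the argument is a clean "same law, contradictory event" contradiction.
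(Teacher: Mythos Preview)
Your proposal is correct and follows essentially the same three-step structure as the paper. Two points are worth flagging where the paper is slightly more careful than your sketch.

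For Step~2 (joint continuity at $t=0$), your remark that ``joint continuity on $[0,\infty)\times E$ for $\tilde g$-type extensions suffices since $N$ is not charged'' does not work as stated, since $\tilde g$ is merely Borel and $\mathrm{H}_2$ gives joint continuity only for continuous test functions. The paper carries out exactly the localization you allude to: it builds $f_0\in C_b(E)$ agreeing with $f$ near $x$ and a continuous majorant $f_1\ge|\tilde f-f_0|$ vanishing at $x$, then splits $|P_{t_n}\tilde f(x_n)-f(x)|$ via $f_0$ and controls the remainder by $P_{t_n}f_1(x_n)\to f_1(x)=0$.

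For Step~3, your claim that for a c\`adl\`ag path ``$\omega(t)\in N$ for some $t\le n$ iff $\inf_{t\le n}d(\omega(t),N)=0$'' is not quite right: a left limit $\omega(t^-)$ may lie in the closed set $N$ while $\omega(s)\notin N$ for every $s$, so the infimum can vanish without the path hitting $N$. The paper avoids this by working instead with the set $D([0,\infty);F)\subset D([0,\infty);E)$, which is Borel by Kuratowski's theorem, noting that $\nu_Y$ is supported there while $[T_N<\infty]\subset (X^\mu)^{-1}\bigl(D([0,\infty);E)\setminus D([0,\infty);F)\bigr)$; this needs only the one-sided inclusion and no delicate measurability of $\{T_N<\infty\}$ as a path functional. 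Your contradiction is the same in spirit, and would go through once you replace the faulty ``iff'' by this Kuratowski argument (or by invoking universal measurability).
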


Before we proceed to the proof of \Cref{thm:pathspace} let us state the following immediate consequence, which reveals that non-existence of a corresponding right-continuous Markov process can even occur even when the jointly continuous semigroup maps bounded functions to $C^\infty$-smooth functions.
\begin{exam}
    Let $(P_t)_{t\geq 0}$ be the 
    Brownian semigroup on $\mathbb{R}$ and set
    \begin{equation*}
    \mathbb{R}_\ast:=\mathbb{R}\setminus \{0\} \quad \mbox{ and } \quad
    Q_t(A,x):=P_t(A,x) \quad \mbox{for all } t\geq 0, x\in \mathbb{R}_\ast, A\in \mathcal{B}(\mathbb{R}_\ast).
\end{equation*}
Then the pair $(\mathbb{R}_\ast,(Q_t)_{t\geq 0})$ satisfies $\rm H_1 \mbox{ and } H_2$, moreover $Q_t(\mathcal{B}_b(\mathbb{R}_\ast))\subset C^\infty_b(\mathbb{R}_\ast)$, but fails to satisfy $\rm H_{\mu}$ for any initial distribution $\mu$ on $\mathbb{R}_\ast$.
This follows directly from \Cref{thm:pathspace} since $(P_t)_{t\geq 0}$ is the transition function of the $\mathbb{R}$-valued Brownian motion which is point recurrent, in particular it hits $\{0\}$ with probability $1$ a.s. for all initial distributions.
\end{exam}

\begin{rem}
We would like to point out that Theorem 2.9 on p. 150 in \cite{vanCast11} contradicts our result from \Cref{thm:pathspace}, 
where the 
%''folklore belief'' 
answer to the open question 
is even strengthened by assuming merely that $(P_t)_{t \geq 0}$ is Feller, i.e. $P_t(C_b(E)) \subset C_b(E)$  for all $t \geq 0$, because all conditions made on $(P_t)_{t \geq 0}$ there (see Definition 2.4 on p. 138/139 of \cite{vanCast11}) are fulfilled e.g. by our counter Example 2.2, when reformulated in the time homogeneous case. So, probably the author just forgot a further condition ensuring the existence of the Markov process claimed in his Theorem 2.9. Such further conditions have been explored extensively and are more or less equivalent to the existence of suitable Lyapunov functions with compact sublevel sets (see \cite{BeRo11}, \cite{BeRo11a},  \cite{BeBo05}, and \cite{BeBoRo06a}). 
\end{rem}

\begin{proof}[\bf Proof of \Cref{thm:pathspace}]
For clarity, let us organize the proof in three steps.

\medskip
\noindent{{\bf Step 1:} $H_1$ holds.} It follows immediately since $F$ is an open subset of the Polish space $E$, hence $F$ is also Polish.

\medskip
\noindent{{\bf Step 2:} $H_2$ holds.} The fact that $(Q_t)_{t\geq 0}$ is a 
semigroup of strong Feller Markov kernels on $F$ is inherited trivially from $(P_t)_{t\geq 0}$ due to the fact that $P_t(E\setminus F,x)\equiv 0$ for all $t>0,x\in E$.
So,  let us show that the mapping $[0,\infty)\times F\ni(t,x)\longmapsto Q_tf(x)\in \mathbb{R}$ is jointly continuous for each $f\in C_b(F)$.
To this end, let $$(0,\infty)\times F\ni(t_n,x_n)\mathop{\longmapsto}\limits_n (t,x)\in [0,\infty)\times F \quad \mbox{and}\quad f\in C_b(F).$$
Let $d$ be a metric that generates the Polish topology on $E$, $\varepsilon>0$ such that $B(x,\varepsilon):=\{y\in E : d(x,y)<\varepsilon\}\subset F$.
Further, let $\widetilde{f}\in \mathcal{B}_b(E)$, $f_0,f_{1} \in C_b(E)$ be such that
\begin{equation*}
    \widetilde{f}(y):=
    \begin{cases}
        f(y), \quad &y\in F\\
        0,    \quad &y\in N=E\setminus F
    \end{cases}
\end{equation*}

\begin{equation*}
    f_0(y):=
    \begin{cases}
        f(y), \quad &y\in B(x,\varepsilon/2)\\
        0,    \quad &y\in E\setminus B(x,\varepsilon)
    \end{cases}
\end{equation*}

\begin{equation*}
    f_{1}(y)=0 \mbox{ for } y\in B(x,\varepsilon/4) \mbox{ and }f_1 \geq |\widetilde{f}-f_0|.
\end{equation*}

%Let us treat first the case when $t>0$, which is further divided in two sub-cases:

%\noindent
%If $t_n\searrow 
Let $t>0$ and $t_0\in (0, t)$.
Then for large enough $n$
then
\begin{equation*}
Q_{t_n}f(x_n)=P_{t_n}\widetilde{f}(x_n)=
P_{t_n-t_0}(P_{t_0}\widetilde{f})(x_n)
\mathop{\longrightarrow}\limits_n P_t\widetilde{f}(x)=Q_tf(x), 
\end{equation*}
where the convergence is ensured by the fact that 
$P_{t_0} \widetilde{f}\in C_b(E)$ and thus 
$(s,x)\longmapsto P_s(P_{t_0} \widetilde{f})(x)$ is jointly continuous.

Let $t=0$. 
Then
\begin{align*}
   |Q_{t_n}f(x_n)-f(x)|=|P_{t_n}\widetilde{f}(x_n)-f_0(x)|\leq|P_{t_n}\widetilde{f}(x_n)-P_{t_n}f_0(x_n)|+|P_{t_n}f_0(x_n)-f_0(x)|.
\end{align*}
By the joint continuity of $(P_t)_{t\geq 0}$ it is sufficient to prove that $\lim\limits_n|P_{t_n}\widetilde{f}(x_n)-P_{t_n}f_0(x_n)|=0$.
But
\begin{align*}
|P_{t_n}\widetilde{f}(x_n)-P_{t_n}f_0(x_n)|\leq P_{t_n}|\widetilde{f}-f_0|(x_n)\leq P_{t_n}f_1(x_n)\mathop{\longrightarrow}\limits_n f_1(x)=0.
\end{align*}

\medskip
\noindent{{\bf Step 3:} $H_{\mu|_F}$ fails.}
Assume that there exists an $F$-valued (temporally homogeneous) Markov process $Y^{\mu|_F}:=(Y^{\mu|_F}_t)_{t\geq 0}$ with a.s. c\`adl\`ag paths in $F$, defined on a filtered probability space $(\Omega', \mathcal{G},\mathcal{G}_t,\mathbb{Q}^{\mu|_F})$ satisfying the usual hypotheses, transition function $(Q_t)_{t\geq 0}$ and initial distribution $\mu|_F$.

Let $D([0,\infty);E)$ be the space of c\`adl\`ag functions from $[0,\infty)$ to $E$ endowed with the (Polish) Skorohod topology. 
Then, we can regard the two processes $X^\mu$ and $Y^{\mu|_F}$ as random variables with values in $D([0,\infty);E)$, whose laws are denoted by $\nu_X:=\mathbb{P}^\mu\circ \left(X^\mu\right)^{-1}$ and $\nu_Y:=\mathbb{Q}^{\mu|_F}\circ \left(Y^{\mu|_F}\right)^{-1}$, respectively.
On the one hand, clearly, these two laws are uniquely determined by their (finite) $n$-dimensional marginals $\nu_X^{t_1,\dots,t_n}$ and $\nu_Y^{t_1,\dots,t_n}$, respectively, for all $n\geq 1$ and $t_1,\dots,t_n\in [0,\infty)$.
On the other hand, by a standard usage of the Markov property for $X^\mu$ and $Y^{\mu|_F}$ we get that for all bounded and measurable functions $f:E^n\longrightarrow \mathbb{R}$
\begin{align*}
    %&\nu_X^{t_1,\dots,t_n}(f)=\int\limits_E P_{t_1}\left(f_1P_{t_2}\left(f_2\cdots P_{t_n}(f_n)\right)\right) \; d\mu\\
    \nu_X^{t_1,\dots,t_n}(f)&=\mathbb{E}^{\mathbb{P}^\mu}\left\{f(X^\mu_{t_1},\dots,X^\mu_{t_n})\right\}\\
    &=\int\limits_E f(x_1,\dots,x_n)P_{t_n}(dx_n,x_{n-1})\dots P_{t_1}(dx_1,x) \; \mu(dx),\\
    \intertext{and since $\mu(F\setminus E)=0= P_t(E\setminus F,x),t>0,x\in E,$ we can continue with}
    &= \int\limits_F f|_F(x_1,\dots,x_n)Q_{t_n}(dx_n,x_{n-1})\dots Q_{t_1}(dx_1,x) \; \mu|_F(dx)\\
    &=\mathbb{E}^{\mathbb{Q}^{\mu|_F}}\left\{f(Y^{\mu|_F}_{t_1},\dots,Y^{\mu|_F}_{t_n})\right\}\\
    &=\nu_Y^{t_1,\dots,t_n}(f).
\end{align*}
Consequently,
\begin{equation}\label{eq:nu}
    \nu_X=\nu_Y.
\end{equation}
Also, note that  by Kuratowski's theorem we have that $D([0,\infty);F)$ is a Borel subset of $D([0,\infty);E)$, and since $\nu_Y$ is supported by $D([0,\infty);F)$, by \eqref{eq:nu} we get
\begin{equation}\label{eq:null}
    \nu_X(D([0,\infty);E)\setminus D([0,\infty);F))=0.
\end{equation}
Moreover, $[T_N<\infty]\subset \left(X^\mu\right)^{-1}\left( D([0,\infty);E)\setminus D([0,\infty);F)\right)$, therefore 
$$
\nu_X(D([0,\infty);E)\setminus D([0,\infty);F))\geq \mathbb{P}^\mu(T_N<\infty)>0,
$$
which contradicts \eqref{eq:null}.
\end{proof}

\subsection{A potential theoretic approach}
% On a question of Le Jan about the Feller semigroups as generators of right Markov processes}

The elementary idea to prove the main result of the previous section was to transfer the problem on the Skorohod space of c\`adl\`ag paths.
There is another approach which we shall adopt in this subsection, based on Ray compactification methods which are developed within the potential theory of Markov processes. 
The result to be obtained in this subsection is in the same spirit as \Cref{thm:pathspace}, however a deeper conclusion comes out. 
To explain this in few words before we dive into more details, recall that in the previous subsection the space $E$ was a priori endowed with a given (Polish) topology, and \Cref{thm:pathspace} provides a way to construct strong Feller semigroups that admit no c\`adl\`ag Markov processes with respect to the given Polish topology.
What we are going to show in the sequel by the potential theoretic approach is that there exists no Polish (or more general, Lusin) topology on the underlying space with respect to which the constructed semigroup admits a right  (not necessarily c\`adl\`ag) Markov process.

Before we proceed, we refer the reader to check the Appendix from \cite{BeCiRo20} for a quick introduction to the basic concepts from probabilistic potential theory, like {\it resolvent}, {\it right process}, {\it excessive} functions and measures, as well as the {\it saturation} of a resolvent, which we shall consider here without further definitions; the above referred Appendix would be sufficient for a non-expert reader to handle the arguments presented in the sequel.

Let $(E,\mathcal{B})$ be a Lusin measurable space and $\mathcal{U}=(U_q)_{q> 0}$ be a sub-Markovian resolvent of kernels on $(E, \mathcal{B})$
and assume that 
 $X=(\Omega, \mathcal{F}, \mathcal{F}_t , X_t, \theta_t , \mathbb{P}^x)$ is a (conservative, for simplicity) right Markov process with state space $E$ having 
 $\mathcal{U}=(U_q)_{q> 0}$ as associated resolvent, that is,
 for all $f\in b\mathcal{B}(E)$, $x \in E$,  and $q >0$ we have
$$
U_q f(x)=\mathbb{E}^{x} \int_0^{\infty}\!\!  e^{-q t} f(X_t) dt;
$$
notice that $E$ is not endowed with a specific topology, whilst the definition of a right Markov process (see \cite[Appendix]{BeCiRo20}) requires that there exists some Lusin topological space with Borel $\sigma$-algebra precisely $\mathcal{B}$, with respect to which the process has right-continuous paths a.s. 
It is then a consequence that a right Markov process has a.s. right-continuous paths in any Lusin topological space on $E$ with Borel $\sigma$-algebra precisely $\mathcal{B}$; see \cite[Corollary 3.10]{BeCiRo20}.

Before we proceed, let us take a moment and explain that when a
{\it generalized Feller property} (e.g. in the spirit of \cite{BeRo11}, page 846) holds for a resolvent $\mathcal{U}$, then the existence of an associated Markov process with a.s. right-continuous paths with respect to some proper Lusin topology on $E$ will automatically imply the existence of a right (hence more regular) Markov process on $E$ with the same resolvent.
A similar result has been presented in \cite{BeCiRo22}, under the aditional assumption that the paths are c\`adl\`ag.

\begin{prop} \label{prop2.1}
Assume that $\mathcal{B}$ is the Borel $\sigma$-algebra of a given Lusin topology $\tau$ on $E$, and $\mathcal{U}$ is a Markovian resolvent of kernels on $E$ with the property that there exists a vector lattice $\mathcal{C}\subset C_b(E)$ such that
\begin{enumerate}[(i)]
\item $1 \in \mathcal{C}$ and there exists a countable subset in $\mathcal{C}$ which separates the points of $E$.
\item We have ${U}_\beta f \in \mathcal{C}$  for all $f \in \mathcal{C}$ and $\beta >0$.
\end{enumerate}

If for all $x\in E$ there exists an a.s. right-continuous temporally homogeneous Markov process $(X_t^x)_{t\geq 0}$ defined on $(\Omega, \mathcal{F},\mathcal{F}_t,\mathbb{P}^{x})$, with resolvent $\mathcal{U}$ and initial distribution $\delta_x$, then there exists a right process on $X'$ on $E$ which has a.s. right-continuous paths with respect to $\tau$, sharing the same resolvent $\mathcal{U}$.
\end{prop}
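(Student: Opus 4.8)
\textbf{Proof plan for Proposition \ref{prop2.1}.}
The plan is to use the hypotheses (i) and (ii) to build a Ray-type compactification of $E$ out of the vector lattice $\mathcal{C}$, transport the given family of right-continuous Markov processes $(X^x_t)$ to this compact space, and then argue that the Ray process so obtained is in fact already the desired right process on $(E,\tau)$ once we check it never leaves $E$. First I would let $\mathcal{C}_0$ be the smallest subset of $C_b(E)$ containing $\mathcal{C}$, stable under $U_\beta$ for rational $\beta>0$ and under finite lattice operations and rational linear combinations; by (ii) this is still a countable (after taking a countable uniformly dense subfamily) subset of $C_b(E)$ separating points, contained in the set of $\beta$-excessive functions' differences for the resolvent. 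One then forms the Ray cone $\mathcal{R}$ generated by $\mathcal{C}_0$ and embeds $E$ via the map $j(x)=(g(x))_{g}$ into a metrizable compactification $\overline{E}$; since the separating family is countable, $\overline{E}$ is a compact metric space and $j$ is a Borel isomorphism of $E$ onto a Borel subset $j(E)$ of $\overline{E}$, with $\tau$ the trace topology (here one invokes that a countable point-separating family of continuous functions on a Lusin space gives a Borel isomorphism onto its image, cf.\ the Appendix of \cite{BeCiRo20}).

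Next I would push the resolvent $\mathcal{U}$ forward to $\overline{E}$. Because $U_\beta$ maps $\mathcal{C}$ into $\mathcal{C}\subset C(\overline E)|_E$ and these functions extend continuously to $\overline E$, the operators $U_\beta$ extend to a Ray resolvent $\overline{\mathcal{U}}=(\overline U_\beta)$ on $C(\overline E)$; this is the standard Ray--Knight construction, and it yields a Ray process $\overline X$ on $\overline E$ — a right-continuous strong Markov process whose resolvent is $\overline{\mathcal{U}}$ — for every starting point in $\overline E$. The key point is then to identify $\overline X$ started at $j(x)$ with (the image under $j$ of) the given process $X^x$. For this I would compare finite-dimensional distributions: both processes are Markov with the same initial law $\delta_{j(x)}$ and, on the functions in $\mathcal{C}_0$, the same $\lambda$-potentials, hence — by a monotone class / Laplace-transform-in-time argument together with the right-continuity of both processes in $t$ — the same transition semigroup on $j(E)$; since $X^x$ lives in $j(E)$ a.s., its law on path space coincides with that of $\overline X_{\cdot}$ restricted to paths staying in $j(E)$. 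In particular $\mathbb{P}^{j(x)}(\overline X_t\in j(E)\ \forall t\ge 0)=1$ for every $x\in E$, i.e.\ $j(E)$ is ``not left'' by the Ray process started inside it.

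From here the conclusion is routine Ray--Knight theory: restricting $\overline X$ to the (Borel, a.s.-invariant) set $j(E)$ gives a Markov process with state space $j(E)$, right-continuous paths in the compact metric topology, and resolvent $\overline{\mathcal{U}}|_{j(E)}=j_*\mathcal{U}$; transporting back by $j^{-1}$ produces a process $X'$ on $E$ with resolvent $\mathcal{U}$, right-continuous with respect to the (Lusin) trace topology on $j(E)$, which one checks coincides with $\tau$ because the embedding $j$ is a homeomorphism of $(E,\tau)$ onto $j(E)$ with its subspace topology. It remains to verify that $X'$ satisfies the full axioms of a right process — measurability of $x\mapsto\mathbb{P}^x$, the strong Markov property, and right-continuity of $t\mapsto f(X'_t)$ for $f$ $\alpha$-excessive — all of which are inherited from the corresponding properties of the Ray process $\overline X$ restricted to an a.s.-invariant Borel set (the relevant excessive functions for $\mathcal{U}$ are exactly the traces on $j(E)$ of excessive functions for $\overline{\mathcal{U}}$, which are finely continuous along $\overline X$).

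The step I expect to be the genuine obstacle is the identification of $\overline X$ started in $j(E)$ with the image of $X^x$, and in particular the resulting claim that $j(E)$ is invariant under the Ray process: a priori the Ray process could instantaneously leave $j(E)$ through ``branch points'' of the Ray topology, and ruling this out is exactly where hypotheses (i)--(ii) — ensuring $\mathcal{C}$ is rich enough that $j$ is a Borel isomorphism and that the extended resolvent genuinely reproduces $\mathcal{U}$ on $E$ — must be used in an essential way, together with the assumed a.s.\ right-continuity of the given processes $X^x$ to pin down the path law rather than merely the one-dimensional marginals. Everything else (the compactification, the Ray resolvent extension, and the final transfer back to $\tau$) is standard once this invariance is in hand.
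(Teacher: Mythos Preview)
Your outline follows the same architecture as the paper's proof: build a Ray-type enlargement of $E$ from the lattice $\mathcal{C}$, obtain a right process on the larger space by standard Ray--Knight machinery, then use the given processes $X^x$ to show this Ray process does not leave $E$. The paper does exactly this, citing \cite[Proposition~2.1]{BeRo11} and \cite[Theorem~6.15]{BeCiRo22} for the enlargement step (a Lusin space $E'\supset E$ with $\tau'|_E\subset\tau$ rather than a compactification, but this is cosmetic).

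There is, however, a genuine gap in your argument at the very end. You assert that ``$j$ is a homeomorphism of $(E,\tau)$ onto $j(E)$ with its subspace topology,'' and use this to conclude $\tau$-right-continuity of $X'$. This is false in general: the functions in $\mathcal{C}$ are $\tau$-continuous, so $j$ is continuous, but nothing forces $j^{-1}$ to be continuous --- the Ray (trace) topology on $E$ is typically \emph{strictly coarser} than $\tau$. The paper explicitly has only $\tau'|_E\subset\tau$. Consequently, restricting the Ray process to $j(E)$ gives you right-continuity only in the coarser topology, not in $\tau$, and your transfer step does not yield the conclusion.

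The paper closes this gap by a different device. It compares the laws of $X^x$ and $X'$ on the product space ${E'}^D$ indexed by the countable set $D$ of dyadic times (where finite-dimensional marginals determine everything, and these agree via the resolvent/Laplace-transform argument you describe). It then invokes Dellacherie--Meyer \cite[Ch.~IV]{DeMe78} to get that the set $W^E$ of restrictions to $D$ of $\tau$-right-continuous $E$-valued paths is \emph{universally measurable} in ${E'}^D$, so one can legitimately write $\nu'_x(W^E)=\nu_x(W^E)=1$. Finally, since $X'$ is $\tau'$-right-continuous and agrees on $D$ with a $\tau$-right-continuous path (which is a fortiori $\tau'$-right-continuous), the two must coincide for all $t$, giving $\tau$-right-continuity of $X'$ itself. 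This universal-measurability-plus-dyadic-interpolation step is precisely what substitutes for the homeomorphism claim you made, and it is the non-obvious ingredient your sketch is missing.
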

\begin{proof}
    By dominated convergence and the right-continuity of $X$ it follows that $\lim\limits_{\alpha \to \infty}\alpha {U}_\alpha f=f$ point-wise on $E$ for all $f \in \mathcal{C}$.
Hence we can apply \cite[Proposition 2.1]{BeRo11} and e.g. \cite[Theorem 6.15]{BeCiRo22} to construct a right Markov process $X'=(\Omega', \mathcal{F}', \mathcal{F}'_t, X'_t, \theta'_t, \mathbb{P'}^x)$ on a larger Lusin topological space $(E', \tau')$ such that $E\subset E'$ is $\mathcal{B}(E')$-measurable and $\tau'|_E \subset \tau$, whose resolvent denoted by $\mathcal{U}'$ is an extension of $\mathcal{U}$ from $E$ to $E'$, that is $(U_\alpha'f)|_E=U_\alpha(f|_E)$ for all $\alpha>0$ and $f\in \mathcal{B}_b(E')$.

Let $D$ be the countable set of dyadics in $[0,\infty)$, and denote by $W^E$ (resp.$W^{E'}$) the space of the restrictions to $D$ of all right-continuous paths from $[0,\infty)$ to $E$ (resp. $E'$).
Also, consider the product ${E'}^D$ endowed with the canonical $\sigma$-algebra, and consider the laws of the two processes $X^x$ and $X'^x$ on ${E'}^D$
$$
\nu_{x}:=\mathbb{P}^x\circ (X^x)^{-1} \quad \mbox{and} \quad \nu'_{x}:=\mathbb{P'}^x\circ (X')^{-1} \quad \mbox{ for all } x\in E.
$$
Now, on the one hand any distribution on ${E'}^D$ is uniquely determined by its finite dimensional marginals, hence $\nu_x$ and $\nu_x'$, by the Markov property, are uniquely determined by their one-dimensional marginals. 
The latter marginals are in turn uniquely determined by their Laplace transforms, more precisely by $U_\alpha(f|_{E},x)$ and $U'_\alpha(f,x)$ respectively, for all bounded and $\tau'$-continuous functions $f$ on $E'$.
Since for $x\in E$ we have $U_\alpha(f|_E)(x)=U'_\alpha(f)(x)$ we deduce that as laws on ${E'}^D$,
\begin{equation*}
    \nu_x=\nu_x' \quad \mbox{ for all } x\in E.
\end{equation*}
Next, by \cite{DeMe78}, Chapter IV, pages 91-92 we have that $W^E$ is a universally measurable subset of ${E}^D$, hence of ${E'}^D$. 
Consequently, we can rigorously write 
$$
\nu_x'(W^E)=\nu_x(W^E)=1,
$$
so, under $\mathbb{P'}^x, x\in E$, the paths of $X'$ are restrictions to $D$ of right-continuous paths in $E$ w.r.t. $\tau$. 
Since $\tau'|_E\subset \tau$, we deduce that the entire paths of $X'$ lie in $E$ and are right-continuous with respect to $\tau$. 
This means that the right process $X'$ can be restricted to $E$ and has right-continuous paths there.
\end{proof}

Consider a set $N\in \mathcal{B}$ which is {\it $\mathcal{U}$-negligible}, that is, $U_q(1_N)\equiv 0$ for one (and therefore for all) $q>0$. 
We put $F:= E\setminus N$ and for each $q>0$ we consider the {\it restriction} $U'_q$ of $U_q$ from $E$ to $F$,
$U'_q g:= U_q \overline{g}|_F$ for all $g\in p\mathcal{B}|_F$,  where $\overline{g} $ is any Borel extension of $g$ from $F$ to $E$, i.e., 
$\overline{g} \in p\mathcal{B}$ and $\overline{g}|_F= g$.
Clearly, the family of kernels  $\mathcal{U}': =(U'_q)_{q> 0}$ is a sub-Markovian resolvent of kernels on $(F, \mathcal{B}|_F)$, called 
the {\it restriction} of $\mathcal{U}=(U_q)_{q> 0}$ from $E$ to $F$.

\medskip
The announced main result of this subsection is the following.

\begin{thm}  \label{thm1}
Assume that  
$N\in \mathcal{B}$  is a $\mathcal{U}$-negligible  set which is not polar and consider $\mathcal{U}'=(U'_q)_{q> 0}$, the restriction of 
 $\mathcal{U}=(U_q)_{q> 0}$ from $E$ to $F=E\setminus N$.
 Then there exists no right Markov process on $F$ with resolvent $\mathcal{U}'=(U'_q)_{q> 0}$.
\end{thm}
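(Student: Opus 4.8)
The plan is to argue by contradiction, leveraging the key fact that $N$ is \emph{not polar}, i.e. there is a point (in fact a set of points) from which the original process $X$ hits $N$ with positive probability, while $N$ being $\mathcal{U}$-negligible means no kernel $U_q$ charges $N$, so the restricted resolvent $\mathcal{U}'$ really does ``see'' the same analytic data as $\mathcal{U}$. Suppose, then, that there were a right Markov process $X'$ on $F$ (with respect to \emph{some} Lusin topology on $F$ whose Borel $\sigma$-algebra is $\mathcal{B}|_F$) having resolvent $\mathcal{U}'$. The first step is to pass to a common topological framework: form the saturation of $\mathcal{U}$, or more precisely take a Ray--Knight compactification of $E$ on which $X$ lives, and observe that since $N$ is $\mathcal{U}$-negligible, $\mathcal{U}'$ extends back to all of $E$ exactly as $\mathcal{U}$ (this is the content of the definition of ``restriction'' recalled before the theorem: $U'_q g = U_q\bar g|_F$ and $U_q(1_N)\equiv 0$). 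Thus $X'$ and $X$, viewed through their resolvents, have identical one-dimensional (hence, by the Markov property, all finite-dimensional) marginals started from any $x\in F$, on any reasonable path space — this is the same marginal-matching argument as in the proof of \Cref{prop2.1} and of \Cref{thm:pathspace}.

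Next I would transfer to path space. Realize both $X$ (restricted to the event of never hitting $N$) and $X'$ as random variables valued in the space $W$ of restrictions to the dyadics $D$ of right-continuous paths, inside $\bar E^D$ for a Ray compactification $\bar E$ (exactly as in \Cref{prop2.1}); by the marginal identity the law of $X'$ started at $x\in F$ coincides with the law of $X$ started at $x$. The point $X$ is a right process on $E$, so $\mathbb{P}^x$-a.s. its paths are right-continuous in $E$, hence lie in $W^E$; but $X'$ is required to have right-continuous paths in $F$, hence its paths lie in $W^F$. Since $N$ is not polar, there exists $x\in F$ with $\mathbb{P}^x(T_N<\infty)>0$, and on $[T_N<\infty]$ the right-continuous $E$-path of $X$ actually enters $N=E\setminus F$ and therefore, by right-continuity, cannot be (the extension of) a path lying in $F$: it fails to belong to $W^F$. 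So $\nu_x(W^E\setminus W^F)\ge \mathbb{P}^x(T_N<\infty)>0$, while the law of $X'$ gives full mass to $W^F$. This contradicts the equality of the two laws — done.

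A couple of points need care. First, ``$N$ not polar'' must be turned into the concrete statement ``$\exists\, x\in F:\ \mathbb{P}^x(T_N<\infty)>0$''; polarity of $N$ for $X$ is exactly $\mathbb{P}^x(T_N<\infty)=0$ for all $x$, with the standard subtlety that one may first need to remove from $N$ a semipolar/negligible part or pass to $N^r$ (the set of points regular for $N$), and to know that hitting $N$ starting from $F$ at a \emph{strictly} positive time is what the path-space event records; here one uses that $N$ is closed-ish / finely closed enough, or replaces $N$ by a finely closed non-polar subset contained in it, which is harmless since negligibility is inherited. Second, one must make sure the event ``the path enters $N$'' is measurable on $\bar E^D$ and is reached along the dyadics up to right limits — this is where invoking the Ray compactification and the universal measurability of $W^E$ in $\bar E^D$ (Dellacherie--Meyer, as cited in \Cref{prop2.1}) does the bookkeeping.

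The main obstacle, as I see it, is precisely this measure-theoretic matching of the two processes across a change of topology: one has to be scrupulous that ``$X'$ is a right process on $F$'' really forces its paths, after transporting to the common compactification, to avoid $N$ — i.e. that the topology witnessing right-continuity of $X'$, although a priori unrelated to $\tau$, is comparable enough (via the Ray cone generated by $\mathcal{U}$-excessive functions, which separate points and are continuous in any admissible Lusin topology by \cite[Corollary 3.10]{BeCiRo20}) that a path staying in $F$ for $X'$ cannot secretly be a path hitting $N$ for $X$. Once that compatibility is pinned down, the contradiction is immediate and robust, and in particular it does not depend on the chosen Lusin topology on $F$ — which is the strengthening over \Cref{thm:pathspace} that the theorem advertises.
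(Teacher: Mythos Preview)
Your plan is sound and would work, but it is a genuinely different route from the paper's. The paper does not revisit the path-space comparison at all; instead it stays entirely inside potential theory. It identifies $Exc(\mathcal{U}_1)$ with $Exc(\mathcal{U}'_1)$ via $\xi\mapsto\xi|_F$ (using $\mathcal{U}$-negligibility of $N$), observes that each $\delta_x\circ U_1$, $x\in E$, is extremal there, and concludes that the saturation $F_1$ of $F$ with respect to $\mathcal{U}'_1$ satisfies $F\subset E\subset F_1$. Then it invokes \cite[Theorem~A.15]{BeCiRo20} as a black box: if $\mathcal{U}'$ were the resolvent of a right process on $F$, every subset of $F_1\setminus F$ would be polar for $\mathcal{U}$, hence $N$ would be polar, contradiction. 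What this buys is brevity and a clean abstraction that immediately yields Corollary~\ref{cor2.10} (where only a Ray cone, not a right process, is assumed on $E$). What your approach buys is self-containment: you essentially unpack the probabilistic content of Theorem~A.15 by repeating, in the Ray/Lusin setting, the marginal-matching and hitting-time argument of Theorem~\ref{thm:pathspace} and Proposition~\ref{prop2.1}; the reader sees directly why the non-polarity of $N$ is the obstruction. The delicate points you flag (getting a witness $x\in F$ from ``$N$ not polar'', upgrading agreement of resolvents to agreement of semigroups at \emph{all} times via right-continuity along the Ray cone, and transporting the $\tau'$-right-continuity of $X'$ to the Ray-trace topology on $F$ via \cite[Corollary~3.10]{BeCiRo20}) are all genuinely needed but all go through; once they are in place your contradiction is the same as in Step~3 of Theorem~\ref{thm:pathspace}.
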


\begin{proof}
We argue as in \cite{BeRo11}, the Example at page 849 and the proof of \cite[Corollary A.16]{BeCiRo20}.
 Denoting by $Exc(\mathcal{U})$ the space of all $\sigma$-finite excessive measures for $\mathcal{U}_1:=(U_{\alpha+1})_{\alpha>0}$ (for details see \cite[Appendix]{BeCiRo20}, \cite{BeBo04}, and 
 \cite{BeBoRo06}), we have the identification $Exc(\mathcal{U}_1) \equiv Exc(\mathcal{U'}_1)$ given by $\xi \in Exc(\mathcal{U}_1) \Leftrightarrow \big (\xi|_F \in Exc(\mathcal{U'}_1) \mbox{ and } \xi(N)=0 \big )$  
 and one can easily see that $\delta_x\circ U_1$ is extremal in $Exc(\mathcal{U}_1)$ (hence in $Exc(\mathcal{U'}_1)$) for all $x\in F$.  
Therefore, if $F_1$ is the saturation of $F$ (w.r.t. $\mathcal{U'}_1$) then we have the embeddings
$F\subset E \subset F_1$ and $\mathcal{U}=\mathcal{U'}|_F$. 
If $\mathcal{U'}$ is the resolvent of a right Markov process on $F$, then by  \cite[Theorem A.15]{BeCiRo20} 
we get that the set $N\subset F_1\setminus F$ is polar w.r.t. $\mathcal{U}$, which is a contradiction.
\end{proof}

\begin{rem}
 Particular situations from Theorem \ref{thm1} have been considered in \cite{BeRo11}, the Example at page 849,  
 and  \cite{BeCiRo20}, Corollary A.16.
\end{rem}

Let $(P_t)_{t\geqslant 0}$ be the {\it transition function}  of the right Markov process $X$,
$$
P_t f(x)= \mathbb{E}^x f(X_t), x\in E, f\in p\mathcal{B}, t\geqslant 0.
$$
We assume that $P_t$ is a kernel on $(E, \mathcal{B})$ for all $t>0$ and recall that
the resolvent of kernels  associated with $(P_t)_{t\geqslant 0}$  is precisely 
$\mathcal{U}=(U_q)_{q> 0}$, that is
$$
U_q f=\int_0^\infty \!\!\!\! e^{-q t} P_t f d t \ \mbox{ for all }   f\in p\mathcal{B} \mbox{ and }  q>0.
$$

Let  $N\in \mathcal{B}$ be a set which is {\it $(P_t)$-negligible}, that is $P_t (1_N)\equiv 0$  for all $t>0$. 

\begin{rem}
\begin{enumerate}[i)]
    \item It  is clear that a $(P_t)$-negligible set is also $\mathcal{U}$-negligible and observe  that the converse of this statement is not true. Indeed, a simple counterexample is given by the uniform motion to the right on $\mathbb{R}$: we have $P_t f(x)=f(x+t)$ for all all $x\in \mathbb{R}, t\geqslant 0$, and $f\in p\mathcal{B}(\mathbb{R})$. So, if we take $N= \{0\}$ then $P_t(1_N)\not \equiv 0$ for all $t>0$ but  $N$ is  $\mathcal{U}$-negligible.
    \item However, Theorem \ref{thm1} may be applied for the example in assertion $i)$ because the $\mathcal{U}$-negligible set $N= \{0\}$ is not polar for the uniform motion to the right. Consequently, there is no right Markov process on $\mathbb{R}\setminus \{0\}$, having as associated resolvent the restriction to $\mathbb{R}\setminus \{0\}$ of the resolvent of the uniform motion to the right.
\end{enumerate}
\end{rem}

As before, if  $N\in \mathcal{B}$ is a {\it $(P_t)$-negligible} set and $F: =E\setminus N$,  
then we may  consider the restriction  $Q_t$ of each kernel  $P_t $  from $E$ to $F$ and the family 
$(Q_t)_{t\geqslant 0}$ is  a sub-Markovian semigroup of kernels on $(F, \mathcal{B}|_F)$.
In addition, the resolvent of kernels  associated with $ (Q_t)_{t\geqslant 0}$  is precisely 
$\mathcal{U}': =(U'_q)_{q> 0}$.

\begin{coro}
\begin{enumerate}[i)]
    \item Assume that the transition function  $(P_t)_{t\geqslant 0}$ of a right Markov process is a strong Feller semigroup on $E$ endowed with some given Lusin topology whose Borel $\sigma$-algebra is $\mathcal{B}$. Let $N\in \mathcal{B}$ be a {\it $(P_t)$-negligible} set  and $F :=E\setminus N$. Consider the restriction  $ (Q_t)_{t\geqslant 0}$ of $(P_t)_{t\geqslant 0}$ from $E$ to $F$. Then $(Q_t)_{t\geqslant 0}$ is a strong Feller semigroup on the Lusin topological space $F$ with respect to the trace topology, and there exists no right Markov process with state space $F$,  having $(Q_t)_{t\geqslant 0}$ as transition function. 
    \item Suppose in addition that 
 $E$ is Polish space, $N$ is a closed set and the function $[0, \infty)\times E\ni (t,x)\longmapsto P_t f(x)$ is jointly continuous for each $f\in C_b(E)$. Then $(P_t)_{t\geqslant 0}$ is a strong Feller semigroup on the Polish topological space $F$, $P_tf(x)$  is jointly continuous in $(t,x)$ on $[0, \infty)\times F$, and  there is no right Markov process with state space $F$, having $(P_t)_{t\geqslant 0}$ as transition function.
    \item (cf. Corollary A.16. from \cite{BeCiRo20}). Assume that $(P_t)_{t\geqslant 0}$  is  the $1$-dimensional Gaussian semigroup, i.e., the transition function of the real valued Brownian motion. 
    Consider  the restriction $(Q_t)_{t\geqslant 0}$  of $(P_t)_{t\geqslant 0}$ from $\mathbb{R}$ to $\mathbb{R}_\ast:=\mathbb{R}\setminus \{0 \}$. Then $(Q_t)_{t\geqslant 0}$ is a strong Feller semigroup on the Polish topological space $\mathbb{R}_\ast$, $Q_tf(x)$  is jointly continuous in $(t,x)$ on $[0, \infty)\times \mathbb{R}_\ast$, and  there is no right Markov process with state space $\mathbb{R}_\ast$, having $(Q_t)_{t\geqslant 0}$ as transition semigroup. 
    In particular, there exists no right  Markov process associated with $(Q_t)_{t\geq 0}$ no matter what Lusin topology we choose on $\mathbb{R}_\ast$ whose Borel $\sigma$-algebra is the same as the one generated by intervals.
\end{enumerate}
\end{coro}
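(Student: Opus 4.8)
The three assertions form a descending chain: (iii) is the Gaussian instance of (ii), and (ii) is (i) augmented by the topological refinements available when $E$ is Polish and $N$ is closed. So the plan is to prove (i), add the refinements to obtain (ii), and instantiate for (iii). Throughout I read the hypothesis on $N$ as also requiring that $N$ be \emph{not} polar: this is forced (if $N$ were polar it would a.s.\ never be hit from $F$, and the restriction of $X$ to $F$ would itself be a right Markov process with transition function $(Q_t)$, contradicting the stated conclusion), it is exactly the hypothesis of \Cref{thm1}, and it holds automatically in case (iii).

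For (i) I would treat the two claims separately. The strong Feller property of $(Q_t)$ is a soft consequence of $(P_t)$-negligibility: for $g\in\mathcal{B}_b(F)$ and any Borel extension $\overline{g}\in\mathcal{B}_b(E)$, the function $P_t\overline{g}$ does not depend on the extension over $F$ (since $P_t(1_N)\equiv 0$) and equals $Q_tg$ there, while $P_t\overline{g}\in C_b(E)$ by strong Fellerness of $P_t$ on $E$; hence $Q_tg=(P_t\overline{g})|_F\in C_b(F)$ for the trace topology, which makes $F$ a Lusin space as $F$ is Borel in $E$. The semigroup and Markov properties pass to $(Q_t)$, whose associated resolvent is, as recorded in the text, precisely $\mathcal{U}'$. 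For non-existence: every $(P_t)$-negligible set is $\mathcal{U}$-negligible, so $N$ is a $\mathcal{U}$-negligible, non-polar set and $\mathcal{U}'$ is the restriction of $\mathcal{U}$ to $F=E\setminus N$; by \Cref{thm1} there is no right Markov process on $F$ with resolvent $\mathcal{U}'$. Since a right Markov process with transition function $(Q_t)$ would have resolvent $\int_0^\infty e^{-qt}Q_t\,dt=\mathcal{U}'$, no such process exists.

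For (ii), closedness of $N$ makes $F$ open in $E$, hence Polish, so the strong Feller part (for the restricted semigroup, still denoted $(P_t)$ in the statement with a harmless abuse) follows from (i). The joint continuity of $(t,x)\mapsto Q_tf(x)$ on $[0,\infty)\times F$ for $f\in C_b(F)$ is not new: the hypotheses here on $(P_t)$ and $N$ are exactly those of \Cref{thm:pathspace}, and Step 2 of its proof establishes precisely this --- for $t>0$ via $Q_{t_n}f(x_n)=P_{t_n-t_0}(P_{t_0}\widetilde f)(x_n)$ with $P_{t_0}\widetilde f\in C_b(E)$ and joint continuity of $(P_s)$, and for $t=0$ via continuous cut-offs $f_0,f_1\in C_b(E)$ with $f_1\ge|\widetilde f-f_0|$ vanishing near $x$, so that $P_{t_n}f_1(x_n)\to f_1(x)=0$. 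Non-existence is inherited from (i). For (iii) it then remains only to verify the concrete hypotheses: the Gaussian semigroup is the transition function of one-dimensional Brownian motion, which is Feller, hence a right Markov process; $\{0\}$ is $(P_t)$-negligible because $P_t(x,\cdot)$ has a density, and \emph{not} polar because one-dimensional Brownian motion is point recurrent; and $\mathbb{R}_\ast$ is open in $\mathbb{R}$, hence Polish. Thus (ii) applies with $E=\mathbb{R}$, $N=\{0\}$, $F=\mathbb{R}_\ast$; differentiating under the Gaussian integral even yields $Q_t(\mathcal{B}_b(\mathbb{R}_\ast))\subset C^\infty_b(\mathbb{R}_\ast)$. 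The final, topology-free assertion needs no extra argument: a right Markov process on $\mathbb{R}_\ast$ only requires the existence of \emph{some} Lusin topology with the interval-generated Borel $\sigma$-algebra, whereas \Cref{thm1} excludes any right Markov process with resolvent $\mathcal{U}'$ irrespective of the topology.

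The substantive content is entirely imported: \Cref{thm1} (proved via Ray-type saturation, the identification of the cones of excessive measures for $\mathcal{U}_1$ and $\mathcal{U}'_1$, and \cite[Theorem A.15]{BeCiRo20}) yields non-existence, and Step 2 of \Cref{thm:pathspace} yields joint continuity, so the corollary is essentially an assembly. Hence I expect the difficulty to be bookkeeping rather than depth: using $(P_t)$-negligibility to decouple the strong Feller argument from the choice of Borel extension, passing from ``$(P_t)$-negligible'' to ``$\mathcal{U}$-negligible'' so that \Cref{thm1} applies, and observing that a transition function determines its resolvent so that ``no right process with resolvent $\mathcal{U}'$'' upgrades to ``no right process with transition function $(Q_t)$''. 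The single genuinely delicate point is the polarity hypothesis on $N$: the non-existence conclusions in (i) and (ii) are false for polar $N$, so I would state ``not polar'' explicitly there (it is inherited from \Cref{thm1}), whereas in (iii) it reduces to the classical fact that a point is not polar for one-dimensional Brownian motion.
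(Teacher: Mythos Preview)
Your proposal is correct and matches the paper's intended argument: the paper gives no explicit proof for this corollary, treating it as an immediate consequence of \Cref{thm1} for the non-existence assertions and of Step~2 in the proof of \Cref{thm:pathspace} for the joint continuity in (ii), which is exactly the assembly you carry out. You are also right to flag that the hypothesis ``$N$ is not polar'' is needed in (i) and (ii) to invoke \Cref{thm1}; the paper's statement omits it, but without it the non-existence conclusion is false (as you observe), and in (iii) it holds automatically by point recurrence of one-dimensional Brownian motion.
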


Let further  $\mathcal{U}=(U_q)_{q> 0}$ 
be a sub-Markovian resolvent of kernels on
the Lusin measurable space $(E, \mathcal{B})$, let $\beta>0$ and denote by
$\mathcal{E}(\mathcal{U}_\beta)$  the set of all $\mathcal{B}$-measurable $\mathcal{U}_\beta$-excessive functions:
$u\in \mathcal{E}(\mathcal{U}_\beta)$ if and only if $u$  is a nonnegative
numerical $\mathcal{B}$-measurable  function, 
$qU_{\beta+q}u \leq  u$ for all $q>0$ and $\lim_{q\mapsto \infty} qU_{\beta+q}u(x)=u(x)$
for all $x\in E$.

Assume that 
$$
\leqno(*)\hspace{10mm}
\sigma({\mathcal
E}({\mathcal U}_\beta))={\mathcal B},
1\in \mathcal{E}(\mathcal{U}_\beta),
\mbox{ and if }
u,v \in {\mathcal E}({\mathcal U}_\beta)
\mbox{ then } \inf (u,v) \in {\mathcal E}({\mathcal U}_\beta).
$$
Notice  that condition $(*)$ does not depend on $\beta> 0$ and
it was proven in \cite{BeRo11}, Corollary 2.3,  that $(*)$   is equivalent with
$$
\leqno(**)\hspace{10mm} \mbox{ for some } \beta>0 
\mbox{ there exists a Ray cone associated with }
{\mathcal U}_\beta.
$$

%for all $x\in E$ we have $\inf (u,v)(x)= \widehat{\inf(u,v)}(x);$
%here  if $w$ is a {\it ${\mathcal U}_\beta$-supermedian function}
% (i.e.,  $\alpha U_{\beta+q} w \leq w$ for all 
%$q >0$), its ${\cal U}_\beta$-excessive regularization
%$\widehat{w}$ is given by $\widehat{w}(x)=\sup_q q U_{\beta+q} w(x).$\\

\noindent
{\bf Polar set.} 
A subset $M$ of $E$ is called ${\it polar}$ provided 
that there exists $u\in \mathcal{E} ({\mathcal U}_\beta)$ such that
$M\subset [u=+\infty]$ and the set
$[u=+\infty]$ is $\mathcal{U}$-negligible.\\

The next corollary shows that in Theorem \ref{thm1}
the hypothesis that  the resolvent $\mathcal{U}=(U_q)_{q> 0}$ is associated with a right process may be replaced by a weaker one, namely, by condition $(*)$ (or equivalently,  by $(**)$).

\begin{coro} \label{cor2.10}
Let  $\mathcal{U}=(U_q)_{q> 0}$ 
be a sub-Markovian resolvent of kernels on
the Lusin measurable space $(E, \mathcal{B})$ such that condition
$(**)$ holds, 
let 
$N\in \mathcal{B}$  be a  $\mathcal{U}$-negligible  set which is not polar,  
and consider $\mathcal{U}'=(U'_q)_{q> 0}$, the restriction of \ 
 $\mathcal{U}=(U_q)_{q> 0}$ from $E$ to $F=E\setminus N$.
 Then there exists no right Markov process on $F$ endowed with the  Lusin topology generated by the Ray cone, with resolvent $\mathcal{U}'=(U'_q)_{q> 0}$.
\end{coro}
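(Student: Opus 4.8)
The strategy is to run the argument of \Cref{thm1} essentially verbatim, replacing the role played there by the hypothesis ``\,$\mathcal{U}$ is the resolvent of a right Markov process'' with the weaker condition $(**)$; the task then reduces to checking that $(**)$ delivers all the potential-theoretic ingredients used in that proof.

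First I would verify that condition $(**)$ (equivalently $(*)$) is inherited by the restriction $\mathcal{U}'$. If $\mathcal{R}$ is a Ray cone associated with $\mathcal{U}_\beta$, then, since $N$ is $\mathcal{U}$-negligible, for every $u\in\mathcal{E}(\mathcal{U}_\beta)$ the restriction $u|_F$ is $\mathcal{U}'_\beta$-excessive, $1$ remains excessive, stability under $\inf$ is preserved, and $\sigma(\mathcal{E}(\mathcal{U}'_\beta))=\mathcal{B}|_F$; hence $\mathcal{R}|_F$ is a Ray cone associated with $\mathcal{U}'_\beta$. In particular the Lusin topology on $F$ generated by the Ray cone is well defined, condition $(**)$ holds for $\mathcal{U}'$, and the saturation $F_1$ of $F$ with respect to $\mathcal{U}'_1$ exists.

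Next, exactly as in \Cref{thm1}, condition $(**)$ makes the calculus of excessive measures of \cite{BeBo04,BeBoRo06} (see also \cite[Proposition~2.1, Corollary~2.3]{BeRo11}) applicable to $\mathcal{U}_1$ and $\mathcal{U}'_1$: one has the identification $Exc(\mathcal{U}_1)\equiv Exc(\mathcal{U}'_1)$, namely $\xi\in Exc(\mathcal{U}_1)\Leftrightarrow\bigl(\xi|_F\in Exc(\mathcal{U}'_1)\text{ and }\xi(N)=0\bigr)$; the potentials $\delta_x\circ U_1$, $x\in E$, belong to $Exc(\mathcal{U}_1)\equiv Exc(\mathcal{U}'_1)$ and are extremal there for $x\in F$; and one obtains the embeddings $F\subset E\subset F_1$ with $\mathcal{U}=\mathcal{U}'|_F$, all the resolvents involved being restrictions of the saturated resolvent carried by $F_1$. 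In particular $N=E\setminus F\subset F_1\setminus F$.

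Finally, I would suppose for contradiction that there exists a right Markov process $Y$ on $F$, endowed with the Lusin topology generated by the Ray cone of $\mathcal{U}'_\beta$ and having resolvent $\mathcal{U}'$. Since this is precisely the topology entering the saturation, $F_1$ is the ambient space obtained by saturating $Y$, so \cite[Theorem A.15]{BeCiRo20} applies and gives that $F_1\setminus F$ is polar for the saturated resolvent; restricting from $F_1$ to $E$ an excessive function realizing this polarity, and noting that its infinity set remains $\mathcal{U}_1$-negligible, one concludes that $N\subset F_1\setminus F$ is polar with respect to $\mathcal{U}$, contradicting the hypothesis that $N$ is not polar. One may further note, using \cite[Corollary 3.10]{BeCiRo20}, that this in fact rules out a right Markov process on $F$ with resolvent $\mathcal{U}'$ for \emph{any} Lusin topology on $F$ with Borel $\sigma$-algebra $\mathcal{B}|_F$, since such a process would automatically be right-continuous in the Ray topology as well. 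The genuine work, I expect, sits in this last step: one must make sure that $(**)$ really supplies the extremality of the potentials $\delta_x\circ U_1$ together with the full saturation machinery, and that the Ray topology on $F$ is exactly the one for which $F_1$ is the legitimate ambient Lusin space, so that \cite[Theorem A.15]{BeCiRo20} can be invoked just as in the right-process case of \Cref{thm1}.
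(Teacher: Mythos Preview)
Your proposal is correct and follows exactly the route the paper intends: the paper does not give a separate proof of \Cref{cor2.10} but merely introduces it by saying that in \Cref{thm1} the right-process hypothesis on $\mathcal{U}$ may be replaced by condition $(**)$, and you carry this out by rerunning the argument of \Cref{thm1} verbatim, checking along the way that $(**)$ suffices for the excessive-measure identification, extremality of $\delta_x\circ U_1$, and the saturation machinery needed to invoke \cite[Theorem~A.15]{BeCiRo20}. Your additional verification that $(**)$ passes to the restriction $\mathcal{U}'$ and your closing remark about extending to arbitrary Lusin topologies via \cite[Corollary~3.10]{BeCiRo20} are more detail than the paper provides, but entirely in the same spirit.
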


\subsection{Resulting examples of non-quasi-regular Dirichlet forms}
Let $(\ce, \cd(\ce))$ be the classical Dirichlet form   associated with the one dimensional Brownian motion.
So, $E=\R$, $\cd(\ce)= H^1(\R)$,
$\ce(u,v)= \frac 1 2 \int_E u'v'd m,$
$m$ being the Lebesgue measure on $\R$,  
and let $\mathcal{U}=(U_q)_{q> 0}$ be its resolvent of kernels.
Let $E_o:= E\setminus\{ 0\}$ and $m_o: =m|_{E_o}$. 

Recall that, according to \cite{AlMaRo93} and \cite{MaRo92} (see also \cite{Fi01}),  the analytic property of a Dirichlet form on an $L^2$-space to be quasi-regular is equivalent with the existence of an associated right Markov process.
We claim that the Dirichlet form $(\ce, \cd(\ce))$ is not quasi-regular, regarded as a  form on  $L^2(E_o, m_o)$.
Indeed, assume that the form $(\ce, \cd(\ce))$ on  $L^2(E_o, m_o)$ is quasi-regular, let  $X^o$ be the associated right Markov process 
with state space $E_o$, and 
$\mathcal{U}^o=(U^o_q)_{q> 0}$ be its resolvent of kernels.
Then $[U_q f|_{E_o} \not= U^o_q(f|_{E_o})]$ is a finely open subset of $E_o$ and it is $m_o$-negligible, therefore it is  $m_o$-polar for every $f\in bp\cb (E)$ and $q>0$. 
Consequently, using a monotone class argument, there exists an $m_o$-inessential set $M\subset E_o$ such that $U^o_q|_{E_o\setminus M}= U_q|_{E_o\setminus M}$, $q>0$, 
and $(U^o_q|_{E_o})_{q>0}$ is the resolvent family of the restriction of $X^o$ to $E_o\setminus N$, a right Markov process
on $E_o\setminus N$. 
On the other hand the set $M:=N\cup \{ 0\}$ is a \ $\cu$-negligible subset of $E$ which is not polar and this leads to a contradiction to
Theorem  \ref{thm1}.
We conclude that the Dirichlet form $(\ce, \cd(\ce))$ is not  quasi-regular, regarded as a form on  $L^2(E_o, m_o)$.

\begin{rem}
$(i)$ We can argue as before for general semi-Dirichlet forms instead of the classical Dirichlet form to get examples of semi-Dirichlet forms which are not quasi-regular, by eliminating a set which is not polar from the space. 

$(ii)$ It was proven in \cite{RoSch95} (see also \cite{BeBoRo06}, the Example on page 279)
that the Dirichlet form associated with the reflecting Brownian
motion is not quasi-regular, considered on $[0, 1)$.
\end{rem}

\subsection{Feller semigroups on continuous functions vanishing at infinity and Hunt processes} 
A classical result on  the construction of Hunt Markov process is the following:
{If $E$ is a locally compact space  with a countable base  then  a sub-Markovian semigroup acting strongly-continuously on the space of continuous functions vanishing at infinity is the transition semigroup of a Hunt process with state space $E$}; cf. Theorem 9.4 from \cite{BlGe68}.
In this frame such a semigroup is called {\rm Feller}. 

We give here a simple example showing that the transition semigroup being a Feller semigroup is not a necessary condition to get a Hunt process.
Indeed, let $b$ be a positive bounded Borelian function on $E$ which is not continuous on $E$ and define
$P_t f := e^{-tb} f$ for all $f \in p\mathcal{B}$.
Then $(P_t)_{t\geq 0}$ is a semigroup of sub-Markovian kernels on $E$,  clearly, it is not a Feller semigroup, however, it is the transition semigroup of a Hunt process
$X=(X_t)_{t\geq 0}$ with state space $E$. 
A rapid argumentation is as follows.
Consider the trivial Markov process $X^ 0=(X^0_t)_{t\geq 0}$ on $E$,  the Markov (actually Hunt) process on $E$ 
for which each point is a trap, i.e., $\Pp^x(X^0_t=x)=1$ for all $t\geqslant 0$ and $x\in E$, or equivalently,
each kernel from its transition semigroup $(Q^0_t)_{t\geqslant 0}$ is the identity operator, $Q^0_t f=f$ for every $t\geqslant 0$ and $f\in p\cb$. 
Furthermore, consider the multiplicative functional induced by $b$,
$M_t= e^{-\int_0^t b(X^0_s) ds}$, $t\geq 0$.
Then take $X$ to be the subprocess of $X^0$, obtained by subordination with the strong multiplicative functional $(M_t)_{t\geq 0}$ and we have 
$P_t f(x)= \mathbb{E}^x(M_t f(X^0_t)), x\in E$.
By Corollary 3.16, page 110 in \cite{BlGe68}, it follows that $X$ is a Hunt process.

\vspace{3mm}

\noindent \textbf{Acknowledgements.} 
Funded by the Deutsche Forschungsgemeinschaft (DFG, German Research Foundation)-SFB 1283/2 2021-317210226. 
For the first and the second authors this work was supported by a grant of the Ministry of Research, Innovation and Digitization, CNCS - UEFISCDI,
project number PN-III-P4-PCE-2021-0921, within PNCDI III.\\

\end{document}